\documentclass{amsart}
\usepackage{colonequals,enumitem,amssymb,amsmath}
\usepackage{graphicx,caption,subcaption}
\usepackage{multirow}


\theoremstyle{plain}
\newtheorem{thm}{Theorem}[section]

\newtheorem{conj}[thm]{Conjecture}

\newtheorem*{defn}{Definition}

\title{Wickets in 3-uniform Hypergraphs}

\begin{document}

\begin{abstract}
In these notes, we consider a Tur\'an-type problem in hypergraphs. What is the maximum number of edges if we forbid a subgraph?
Let $H_n^{(3)}$ be a 3-uniform linear hypergraph, i.e. any two edges have at most one vertex common. A special hypergraph, called {\em wicket}, is formed by three rows and two columns of a $3 \times 3$ point matrix. We describe two linear hypergraphs -- both containing a wicket -- that if we forbid either of them in $H_n^{(3)}$, then the hypergraph is sparse, and the number of its edges is $o(n^2)$. This proves a conjecture of Gy\'arf\'as and S\'ark\"ozy.
\end{abstract}

\author{Jozsef Solymosi}
\address{Department of Mathematics, University of British Columbia, Vancouver, Canada, and Obuda University, Budapest, Hungary}
\email{solymosi@math.ubc.ca}

\maketitle

\section{Introduction}

In this note, we answer a question of Gy\'arf\'as and S\'ark\"ozy. To formulate it, let's review some definitions, notations and a brief history of the problem. 

\begin{defn}
A hypergraph is linear if two edges intersect at most one vertex.
\end{defn}

We will work with 3-uniform linear hypergraphs, i.e. hypergraphs where every edge has three vertices. 
We use the notations $H_n^3$ and $F_n^3$ for 3-uniform hypergraphs on $n$ vertices and $G_n$ for graphs on $n$ vertices. The number of edges in $H$ is denoted by $e(H)$.

It is a notoriously difficult problem to characterize unavoidable subgraphs in dense 3-uniform linear hypergraphs. One of the central conjectures in extremal combinatorics is the following:

\begin{conj}[Brown, Erd\H os, and S\'os]\label{BES}
For every $\ell \geq 3$ and $c>0$ there exists an $n_0 = n_0(c)$ such that if $n>n_0$ and $e(H_n^3) \geq cn^2$
then there exists a $F_{\ell+3}^3 \subseteq H_n^3$ such that $e(F_{\ell+3}^3) \geq \ell$.
\end{conj}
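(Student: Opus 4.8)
The plan is to attack the statement through the regularity/removal philosophy, reducing the extremal problem for $H_n^3$ to a removal lemma. The base case $\ell = 3$ — forbidding $3$ edges on $6$ vertices — is exactly the Ruzsa--Szemer\'edi $(6,3)$-theorem, so I would first isolate and settle that case and then try to bootstrap to larger $\ell$.

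For $\ell = 3$: given a linear triple system $H$ on $n$ vertices with $e(H) \geq cn^2$, I would build a suitable tripartite auxiliary graph $G$ in which each triple of $H$ is encoded as a triangle across the three colour classes. Linearity guarantees that distinct edges produce edge-disjoint triangles, so $G$ contains $\Theta(cn^2)$ pairwise edge-disjoint triangles and cannot be made triangle-free by deleting $o(n^2)$ edges. The triangle removal lemma then forces $G$ to contain at least $\delta n^3$ triangles for some $\delta = \delta(c) > 0$; since only $O(n^2)$ of these come from single edges of $H$, almost all of them are ``crossing'' triangles, and a short case analysis shows each crossing triangle assembles three distinct edges of $H$ spanning at most six vertices, i.e.\ a forbidden $F_6^3$ with $e(F_6^3)\ge 3$. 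This contradiction yields $e(H) = o(n^2)$ for $\ell = 3$.

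For general $\ell$ the natural plan is to pass to the hypergraph regularity method, locate a positive-density regular cluster of triples spanning few vertices, and run a supersaturation/counting argument showing that a dense regular block must contain many embeddings of a configuration of $\ell$ edges on $\ell+3$ vertices. One can also try to iterate the $\ell=3$ argument, repeatedly peeling off a bounded cluster of edges around a vertex whose link interacts densely with the rest of $H$, hoping that a dense linear system eventually concentrates $\ell$ edges on an $(\ell+3)$-set.

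The hard part — and the reason this is a genuine open problem rather than a routine corollary — is precisely this last step for $\ell \geq 4$. A clean way to see the difficulty: $\ell$ edges occupy $3\ell$ vertex-slots but only $\ell+3$ distinct vertices, so one needs total excess degree $\sum_v(\deg_H(v)-1) \geq 2\ell - 3$, and under linearity each pair of edges can absorb at most one of these coincidences. The $(6,3)$-case works because a single overlap of two triples already lands inside the vertex budget, so the removal lemma does all of the work; but for $(\ell+3,\ell)$ with $\ell \ge 4$ each new edge one tries to append typically brings three fresh vertices rather than reusing old ones, overshooting the budget, and the admissible configurations are far denser than anything a direct triangle- or hypergraph-removal count delivers. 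Forcing this repeated reuse of vertices while maintaining linearity is the crux where I expect the argument to stall, and it is exactly this obstruction that motivates attacking structured, weaker targets such as the wicket configurations studied in the rest of this note.
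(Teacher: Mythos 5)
There is nothing to compare here: the statement you were asked about is Conjecture \ref{BES}, and the paper does not prove it. The author explicitly records that only the $\ell=3$ case is known (the Ruzsa--Szemer\'edi $(6,3)$-theorem, quoted as Theorem \ref{63density}) and that the conjecture ``is still open'' for larger $\ell$; the paper merely uses the $\ell=3$ case as a black box to attack the wicket problem. Your proposal correctly recognizes this. Your sketch of the $\ell=3$ case is the standard argument: pass to a tripartite sub-hypergraph, encode each triple as a triangle in an auxiliary graph, use linearity to get $\Theta(n^2)$ pairwise edge-disjoint triangles, invoke the triangle removal lemma to produce $\delta n^3$ triangles, and observe that any triangle not coming from a single triple assembles three triples on at most six vertices. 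That is sound (modulo the routine step of restricting to crossing triples via a random partition), and it matches what the paper imports from \cite{RSz}.

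Your honest refusal to manufacture a proof for $\ell\geq 4$ is the correct outcome, and your diagnosis of the obstruction is apt: each additional edge must reuse old vertices to stay within the $(\ell+3)$-vertex budget, linearity caps how much reuse a pair of edges can provide, and neither triangle removal nor the known hypergraph regularity counting lemmas force this kind of concentration. This is precisely why the paper retreats to structured targets (the two specific $9$-edge hypergraphs containing a wicket) for which removal-type arguments do suffice. In short: the proposal contains no error, but it is a (correctly self-identified) non-proof of an open conjecture, which is the only defensible position given that the paper itself offers no proof of this statement.
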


Although the conjecture does not specify that the hypergraph is linear, it is enough to consider it for linear hypergraphs. 

The $\ell=3$ case of Conjecture \ref{BES} was proved by Ruzsa and Szemer\'edi \cite{RSz}, but for larger values, the conjecture is still open. We refer to works in \cite{SaSe, SoSo, Conat} for partial results and related bounds.

In a closely related line of research, the following question was raised. For a given 3-uniform linear hypergraph, $F$, what is the maximum number of edges in a 3-uniform linear hypergraph without $F$ as a subgraph? 

\begin{defn}
    The Tur\'an number of a linear 3-uniform hypergraph $F$, denoted by $ex_L(n,F)$ is the maximum number of edges of a 3-uniform linear hypergraph not containing a subgraph isomorphic to $F$.
\end{defn}

In \cite{GyS_1} Gy\'arf\'as and S\'ark\"ozy investigated $ex_L(n,F)$ for any $F$ with at most five edges. They had good estimates with the exception of one configuration, which they called {\em wicket}. The wicket, denoted by $W$, is formed by three rows and two columns of a $3\times 3$ point matrix (fig. \ref{Wicket}).

\begin{figure}[h]
\centering
\includegraphics[scale=.6]{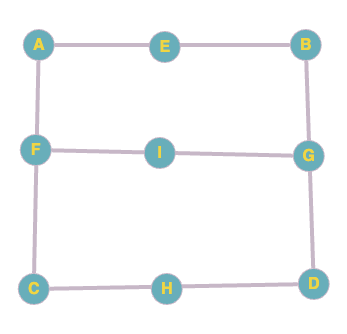}
\caption{The wicket. If we add the edge spanned by vertices $E,I,H$, then it is a grid, $GR$.}
\label{Wicket}
 \end{figure}

\medskip
In their excellent paper, titled {\em The linear Turán number of small triple systems or why is the wicket interesting?} \cite{GyS_1} they review the relevant questions and explain the importance of deciding whether $ex_L(n,W)=o(n^2)$. 

If we add the third column to the wicket to have six edges on the nine vertices, then this configuration can be avoided in dense linear hypergraphs. Let's call this hypergraph a {\em grid}, denoted by $GR$ (see in fig. \ref{Wicket}). A construction of 
Gishboliner and Shapira shows that $ex_L(n,GR)=\Omega(n^2)$ \cite{GS}. 

\section{The Main Result}

In this section, we give two proofs of the conjecture of Gy\'arf\'as and S\'ark\"ozy (Theorem \ref{Main}). In the first proof, we are using the graph regularity lemma only. The second uses hypergraph regularity. We don't know if such powerful tools are needed to prove the conjecture. Both proofs guarantee the existence of a larger graph -- containing a wicket as a subgraph -- for which a $o(n^2)$ type lower bound is known. No similar lower bound is known for wickets. A related conjecture of Gowers and Long in \cite{GL} states that there is a $c>0$ such that if no nine vertices span at least five edges in an $n$-vertex 3-uniform linear hypergraph (like in a wicket), then the number of edges is $O(n^{2-c})$. 

\begin{thm}\label{Main}
If a linear hypergraph contains no wickets, it is sparse, \\$ex_L(n,W)=o(n^2)$. 
\end{thm}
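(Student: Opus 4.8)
The plan is to argue by contradiction: assume $H=H_n^3$ is linear, wicket-free, and dense, say $e(H)\geq cn^2$, and derive a contradiction by exhibiting a wicket. First I would record a convenient description of the target. Labelling the $3\times 3$ matrix so that the rows are $R_1,R_2,R_3$ and the two present columns are $C_1,C_2$, a wicket is nothing but a choice of two vertex-disjoint edges $C_1,C_2$ (the columns) together with three pairwise vertex-disjoint edges $R_1,R_2,R_3$ (the rows), each of which meets both $C_1$ and $C_2$. Linearity makes this description clean: if $C_1\cap C_2=\emptyset$, the $R_i$ are pairwise disjoint, and every $R_i$ meets both columns, then the six intersection points $R_i\cap C_j$ are automatically distinct (a shared point would force either $C_1\cap C_2\neq\emptyset$ or $R_i\cap R_j\neq\emptyset$), and the three \emph{free} third vertices of the rows play the role of the absent third column. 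Thus the whole problem is to locate, in a dense linear hypergraph, two disjoint edges joined by a \emph{rainbow matching} of three disjoint crossing edges.

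To find such a configuration I would pass to the associated $2$-shadow graph $P$ on $V(H)$, where $u\sim v$ iff some edge of $H$ contains both. By linearity each edge of $H$ spans a triangle in $P$, these triangles are pairwise edge-disjoint, and each edge of $P$ lies in exactly one of them; in particular $e(P)=3e(H)\ge 3cn^2$, so $P$ is dense. Applying Szemer\'edi's regularity lemma to $P$ with $\varepsilon\ll c$ yields a regular partition whose reduced graph is dense, hence contains many $\varepsilon$-regular pairs of density bounded below by some $d=d(c)>0$. The next step is a supersaturation step: using the regular structure I would show that there are $\Omega(n^6)$ ordered configurations $(C_1,C_2,R_1,R_2,R_3)$ realizing the wicket \emph{pattern} in $P$, and then upgrade a positive fraction of these so that the five triangles involved are genuine hyperedges of $H$ rather than phantom triangles of $P$. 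Concretely, I would first fix a pair of disjoint edges $C_1,C_2$ of large crossing degree (many edges of $H$ meeting both), which exists by averaging, and then extract three pairwise-disjoint crossing edges with distinct third vertices; reserving these third vertices as free coordinates is exactly where the wicket's asymmetry---only two columns, three unconstrained points---gives the slack needed to keep the choices disjoint.

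The main obstacle, and the reason the statement is delicate, is precisely the distinction between triangles of $P$ and edges of $H$. Triangle-counting alone cannot succeed: a dense $P$ may have $\Omega(n^3)$ triangles while $H$ has only $O(n^2)$ edges, and indeed the closely related \emph{grid} (the wicket with its third column added) can be avoided by a linear hypergraph with $\Omega(n^2)$ edges, by the construction of Gishboliner and Shapira. So any valid argument must never ask for a sixth, completing edge, and must instead exploit that the wicket is strictly sparser than the grid: I expect the crux to be an embedding/counting lemma tailored to $H$ itself (not to $P$) that guarantees, inside a dense regular piece, a robust family of genuine crossing edges between two fixed columns, from which a rainbow matching of three disjoint rows can be greedily selected. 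Controlling the disjointness of these genuine edges while retaining positive density---rather than merely finding the pattern in $P$---is the step I expect to require the most care, and it is where the regularity machinery does the real work.
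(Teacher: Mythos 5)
Your reformulation of the wicket as two disjoint ``column'' edges together with three pairwise disjoint ``row'' edges, each meeting both columns, is correct (linearity does force the six intersection points to be distinct), and you have correctly identified the central difficulty: the grid construction of Gishboliner and Shapira shows that no argument operating purely at the level of the shadow graph $P$ can succeed. But the proposal stops exactly where the proof has to begin. The two load-bearing steps --- ``upgrade a positive fraction of the $P$-patterns so that the five triangles involved are genuine hyperedges'' and ``the crux [is] an embedding/counting lemma tailored to $H$ itself'' --- are announced rather than supplied, and neither can be carried out as described. Szemer\'edi regularity applied to $P$ gives no handle whatsoever on which triangles of $P$ are hyperedges of $H$: the shadow graph of the grid-free construction is just as dense and just as regular, so no ``positive fraction'' transfer from $P$-patterns to genuine configurations can exist. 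The fallback averaging step is equally problematic: the number of triples $(C_1,C_2,R)$ with $R$ crossing both is $O(e(H)\cdot\Delta^2)=O(n^4)$, so the average crossing degree of a pair of disjoint edges is only $O(1)$ and need not exceed $2$; and in any case ``a pair of disjoint edges admitting three pairwise disjoint crossing edges'' is verbatim a wicket, so producing one by averaging is a restatement of the theorem, not a reduction of it.

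What is actually needed is machinery of removal-lemma strength applied to the hypergraph itself rather than to its shadow, and this is what the paper's proofs supply. The first proof starts from the $\delta n^3$ $(6,3)$-configurations guaranteed by the Ruzsa--Szemer\'edi theorem, randomly bipartitions the three vertex classes, chooses a random perfect matching $M$ between two of the ``degree-one'' halves, and encodes the surviving $(6,3)$-configurations as a dense bipartite graph $G_M$ on the ``degree-two'' halves that decomposes canonically into $|M|$ matchings; the induced-matching form of Ruzsa--Szemer\'edi then forces some matching to be non-induced, and the extra spanned edge assembles three $(6,3)$-configurations into a wicket. The second proof builds an auxiliary $4$-partite $3$-graph from the $(6,3)$-configurations and invokes the Frankl--R\"odl theorem on edge-disjoint $K_4^3$'s. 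Either route replaces your unspecified ``embedding lemma for $H$ itself'' with a concrete hypergraph-level tool; without such a replacement your argument has a genuine gap precisely at its crux.
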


The following quantitative version of the Ruzsa-Szemer\'edi theorem \cite{RSz} is used in both proofs:
\begin{thm}\label{63density}
For every $c>0$ there exists a $\delta>0$ such that if the number of edges in a 3-uniform hypergraph $H_n^3$ is at least $cn^2$
then there exists $\delta n^3$ subgraphs $F_6^3 \subseteq H_n^3$ such that $e(F_6^3) \geq 3$.
\end{thm}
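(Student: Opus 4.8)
The plan is to reduce Theorem~\ref{63density} to the counting (supersaturation) form of the triangle removal lemma. First I would pass from the hypergraph to an ordinary graph: define $G$ on the same vertex set $V=V(H_n^3)$ by replacing each hyperedge $\{x,y,z\}$ of $H_n^3$ with the triangle on the three graph-edges $xy, yz, zx$. The crucial point is that linearity makes these triangles \emph{edge-disjoint}: if two distinct hyperedges produced triangles sharing a graph-edge $xy$, then both hyperedges would contain $x$ and $y$, contradicting that any two edges meet in at most one vertex. Hence $G$ contains a family of $e(H_n^3)\ge cn^2$ pairwise edge-disjoint triangles.

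Next I would use this edge-disjoint family to certify that $G$ is far from triangle-free. Any set of graph-edges meeting every triangle of $G$ must use at least one edge from each member of the edge-disjoint family, so it has size at least $cn^2$; equivalently, one cannot destroy all triangles of $G$ by deleting fewer than $cn^2$ edges. Applying the triangle removal lemma with $\varepsilon=c$ in its contrapositive form — a graph that needs at least $\varepsilon n^2$ deletions to become triangle-free contains at least $\delta n^3$ triangles — produces a constant $\delta=\delta(c)>0$ together with at least $\delta n^3$ triangles in $G$.

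It then remains to convert triangles of $G$ into the desired six-vertex configurations. I would split the triangles into the $e(H_n^3)\le \binom{n}{2}$ ``hyperedge triangles'' coming from single edges of $H_n^3$, and all the rest. Since $\binom{n}{2}=o(n^3)$, at least $\tfrac{\delta}{2}n^3$ of the triangles are non-hyperedge triangles once $n$ is large. For such a triangle $\{p,q,r\}$, edge-disjointness forces its three graph-edges $pq,qr,rp$ to lie in three \emph{distinct} hyperedges $e_{pq}, e_{qr}, e_{rp}$; writing $e_{pq}=\{p,q,u\}$, $e_{qr}=\{q,r,v\}$, $e_{rp}=\{r,p,w\}$, linearity again guarantees that $u,v,w$ are distinct from one another and from $p,q,r$, since any coincidence would force two of these hyperedges to share two vertices. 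Thus $\{p,q,r,u,v,w\}$ is a set of six vertices spanning the three edges $e_{pq},e_{qr},e_{rp}$, i.e.\ a copy of $F_6^3$ with $e(F_6^3)\ge 3$. Because the core triangle $\{p,q,r\}$ is recovered from such a configuration as its set of pairwise intersection points, this correspondence is at most boundedly many-to-one, so the number of distinct configurations is $\Omega(n^3)$, which yields the theorem after adjusting the constant $\delta$.

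The main obstacle is the triangle removal lemma itself: the graph encoding above is elementary, but the step ``far from triangle-free implies $\delta n^3$ triangles'' is precisely the supersaturation statement underlying the Ruzsa--Szemer\'edi theorem \cite{RSz}, and it rests on Szemer\'edi's regularity lemma. Everything else — the edge-disjointness coming from linearity, the discarding of the $o(n^3)$ hyperedge triangles, and the verification that each surviving triangle produces six distinct vertices carrying at least three edges — is routine bookkeeping, with linearity invoked at each step to rule out unwanted vertex coincidences.
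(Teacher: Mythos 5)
The paper never actually proves Theorem \ref{63density}; it is quoted as a known quantitative form of the Ruzsa--Szemer\'edi theorem \cite{RSz}, so the only fair comparison is with the standard derivation of that result, and your removal-lemma argument is exactly that derivation. For \emph{linear} hypergraphs your write-up is correct: the hyperedge triangles are pairwise edge-disjoint, so $G$ cannot be made triangle-free by fewer than $cn^2$ deletions; the removal lemma then gives $\delta n^3$ triangles; only $O(n^2)$ of these come from single hyperedges; and each remaining triangle determines three distinct hyperedges on six distinct vertices, with bounded multiplicity in the passage from triangles to $6$-vertex subgraphs. Every one of those steps is sound, and this is all the paper needs, since it only ever applies the theorem to linear hypergraphs (the sentence immediately following the statement makes that restriction explicit).

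The gap is that Theorem \ref{63density} as stated concerns an \emph{arbitrary} 3-uniform hypergraph, while your very first step --- ``linearity makes these triangles edge-disjoint'' --- uses a hypothesis you are not given. This is not fixable by adjusting constants; the encoding itself fails for non-linear $H$. Take $V=A\cup B\cup M$ with $|A|=|B|\approx n/2$ and $|M|=\sqrt{n}$, and let the edges be $\{a,b,m(a,b)\}$ for every pair $(a,b)\in A\times B$, with $m(a,b)\in M$ arbitrary. Then $e(H)\geq n^2/5$, but the auxiliary graph $G$ is the complete bipartite graph between $A$ and $B$ plus at most $n\sqrt{n}=o(n^2)$ edges meeting $M$; deleting those $o(n^2)$ edges leaves a bipartite, hence triangle-free, graph, so $G$ has only $o(n^3)$ triangles and the removal lemma yields nothing --- even though this $H$ satisfies the conclusion of the theorem, as pairs $(a,m)$ of codegree at least $3$ supply the required configurations directly. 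So a proof of the statement as written needs an additional reduction before your argument can start: for instance, split off the edges containing a pair of codegree at least $2$, harvest six-vertex configurations directly from high-codegree pairs and from ``diamonds'' (two edges sharing a pair), and run your triangle argument only on the remaining linear subhypergraph. With that preprocessing added, your proof becomes a complete and standard proof of the theorem; without it, what you have proved is the linear case only.
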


Since we are considering linear hypergraphs only, we can select a specific subgraph from $F_6^3$ if $e(F_6^3) \geq 3$. It is a 6-vertex hypergraph with three edges such that every edge has one degree-one vertex and two degree-two vertices. We call it a $(6,3)$-configuration.

The second is the ``induced matching'' variant. A matching $M$ in a graph is an induced matching if the vertices of $M$ do not span edges other than the  edges of $M$.

\begin{thm}\label{matching}
For every $c>0$ there is an $N(c)$ such that if a bipartite graph on $2n$ vertices is the union of $n$ induced matchings, and it has at least $cn^2$ edges, then $n\leq N(c)$.
\end{thm}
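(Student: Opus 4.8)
The plan is to encode the induced-matching structure as a linear triple system and then apply Theorem \ref{63density}. Write the bipartite graph as $G$ with parts $A$ and $B$, and let $M_1,\dots,M_n$ be the induced matchings, which I take to be edge-disjoint (the standard formulation), so that their union is $G$. Introduce a third vertex set $Z=\{z_1,\dots,z_n\}$, one vertex per matching, and define a $3$-uniform hypergraph $\mathcal{H}$ on $A\cup B\cup Z$, so on $3n$ vertices, whose edges are the triples $\{a,b,z_i\}$ for each edge $ab\in M_i$. Then $e(\mathcal{H})=e(G)\geq cn^2$, and every edge of $\mathcal{H}$ meets each of $A,B,Z$ in exactly one vertex.

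First I would check that $\mathcal{H}$ is linear. Two edges sharing their $Z$-vertex come from the same matching $M_i$, hence from disjoint matching edges, so they meet only in $z_i$; two edges sharing an $A$-vertex $a$ arise from matching edges incident to $a$, and since $a$ has a unique partner in each matching and the matchings are edge-disjoint, they cannot share a second vertex; the $B$-side is symmetric. Hence any two edges of $\mathcal{H}$ meet in at most one vertex, and in particular each edge of $G$ and each vertex of $Z$ lies in exactly one triple.

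The crux of the argument is that $\mathcal{H}$ contains no $(6,3)$-configuration, and this is exactly where the word \emph{induced} is used. In a linear triple system a $(6,3)$-configuration is a loose triangle: three edges $e_1,e_2,e_3$ whose pairwise intersections are three distinct degree-two vertices. Since each edge of $\mathcal{H}$ has one vertex in each part, these three shared vertices must lie in three different parts, say $z\in Z$, $a\in A$, $b\in B$. Tracing the edges, two of them force two distinct matching edges of $M_z$ to be incident to $a$ and to $b$ respectively, while the third edge records that $ab\in G$. Thus $a$ and $b$ are both matched by $M_z$ and yet span an edge of $G$ outside $M_z$, contradicting that $M_z$ is an induced matching. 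The same computation rules out every labeling of which shared vertex lies in which part, so no $(6,3)$-configuration can occur.

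To finish, I would invoke Theorem \ref{63density} with constant $c/9$ applied to the $3n$-vertex hypergraph $\mathcal{H}$: since $e(\mathcal{H})\geq \tfrac{c}{9}(3n)^2$, the theorem would produce a positive quantity $\delta\cdot(3n)^3$ of $(6,3)$-configurations once $3n$ exceeds a threshold depending only on $c$. As $\mathcal{H}$ has none, this threshold cannot be exceeded, giving $n\leq N(c)$. The main obstacle is the structural claim of the previous paragraph: verifying that the induced property of the matchings is precisely what forbids loose triangles in $\mathcal{H}$, which is the only place the hypothesis is genuinely used; everything else is bookkeeping and a single application of the quantitative Ruzsa--Szemer\'edi bound.
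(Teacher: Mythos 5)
The paper does not actually prove Theorem \ref{matching}: it is quoted, together with Theorem \ref{63density}, as a known formulation of the Ruzsa--Szemer\'edi theorem \cite{RSz}, so there is no in-paper argument to compare against. Your write-up supplies the standard derivation of the induced-matching form from the $(6,3)$-form, and it is correct. The tripartite system $\mathcal{H}$ on $A\cup B\cup Z$ is linear exactly because the $M_i$ are edge-disjoint matchings (and you are right that edge-disjointness may be assumed, since deleting repeated edges keeps each $M_i$ an induced matching without changing the union); in a linear $3$-partite triple system any six vertices carrying three edges must form a loose triangle whose three degree-two vertices fall into the three different parts; and such a triangle, with its degree-two vertex $z\in Z$, exhibits two distinct edges of $M_z$ whose free endpoints $a\in A$, $b\in B$ span an edge belonging to a different matching, so $ab\notin M_z$ and inducedness fails. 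The one place to read carefully is Theorem \ref{63density} itself, whose statement in the paper suppresses the implicit ``for $n$ large enough''; you interpret it in the intended way, and the contrapositive gives $3n$ bounded by a threshold depending only on $c/9$, hence $n\le N(c)$. In short: the proof is complete, but it is the classical equivalence rather than an alternative to anything the paper does, since the paper treats this statement as a black box.
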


\subsection{First Proof of Theorem \ref{Main}}
\begin{proof}
In our first proof, we will use the above two results only. Let us suppose that for a $c>0$, we have infinitely many linear 3-uniform hypergraphs with at least $cn^2$ edges without a wicket. 
We assume that $n$ is divisible by six to avoid carrying fractions. In the proof, we will use constants, $c_i>0$, which only depend on the initial $c$.

\medskip
\noindent
We label the main steps of the proof for easier reference. 
\begin{enumerate}
    \item We can assume that the graph is three-partite with equal vertex classes $V_1,V_2,V_3$, where $|V_i|= n/3$. Theorem \ref{63density} shows that there are $c_1n^3$ $(6,3)$-configurations, each with two vertices in the classes, one of degree one and one of degree two. In a three-partite graph, the wicket looks like the graph in Fig. \ref{Wicket_drawn}.
\begin{figure}[h]
\centering
\includegraphics[scale=.6]{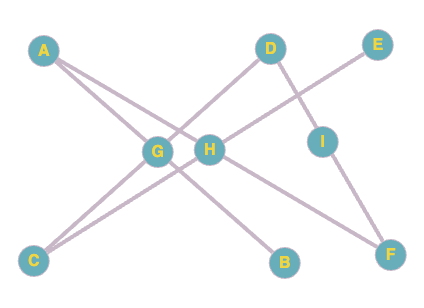}
\caption{The wicket in a 3-partite graph}
\label{Wicket_drawn}
 \end{figure}
 
    \item Take random partitions of the $V_i$-s into two equal parts. The partition classes are denoted as $V'_i,V''_i\subset V_i$ where $|V_i'|=|V_i''|=n/6$. 
    From this point, we only consider $(6,3)$-configurations which have the degree-one vertices in $V'_i$-s and the degree-two vertices in the $V_i''$-s (fig. \ref{63_1}).
    There are $c_2n^3$ such $(6,3)$-configurations.

\begin{figure}[h!]
\centering
\includegraphics[scale=.6]{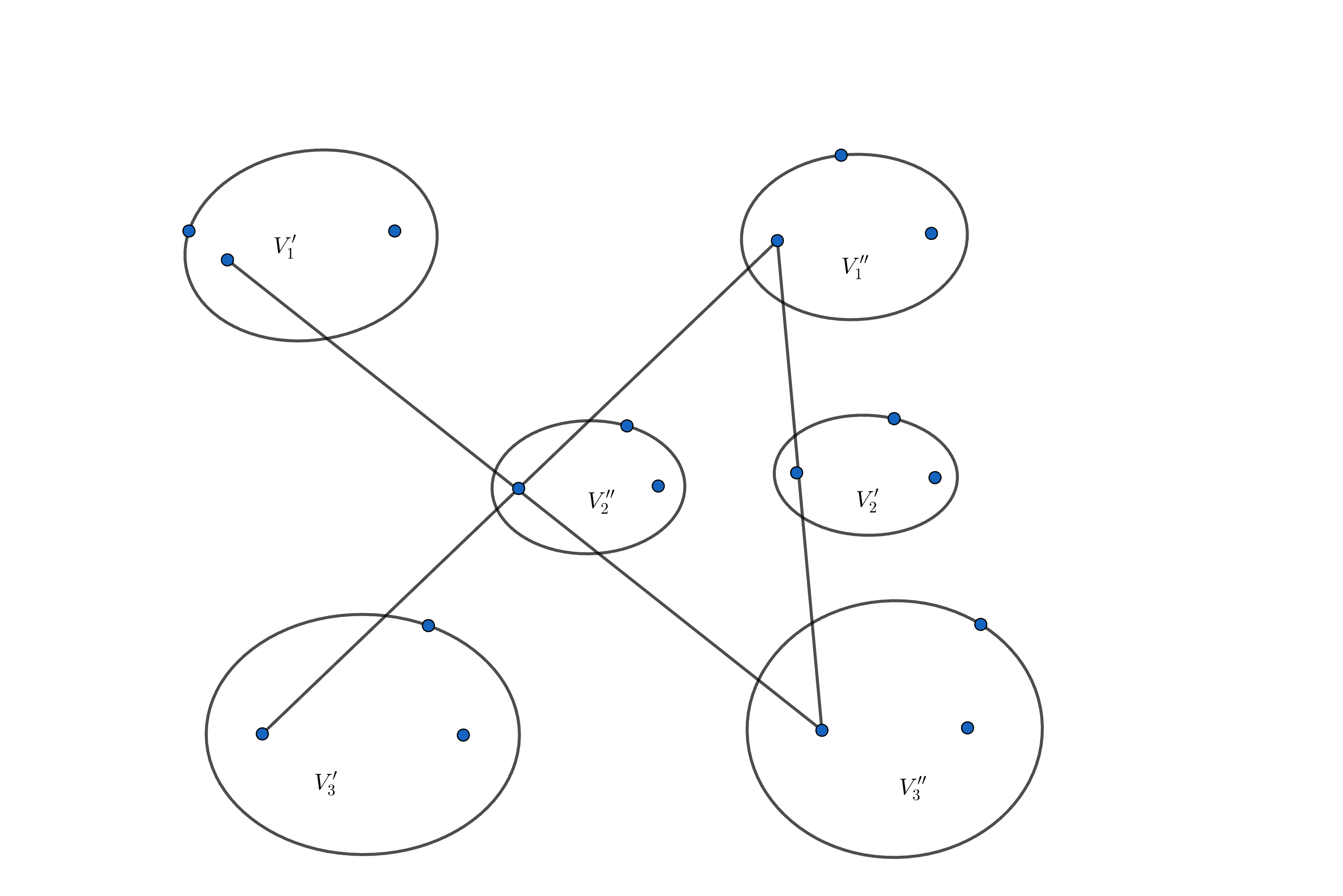}
\caption{Partitioning the vertices of the $(6,3)$-configurations}
\label{63_1}
 \end{figure}

    \item Let's select a perfect matching between the vertices of $V_1'$ and $V_3'$ out of the $(n/6)!$ possible perfect matchings at random (every matching has probability $1/(n/6)!$ to be selected). The matching is denoted by $M$, and the pairs are indexed as $M=\{m_1,m_2,\ldots,m_s\}$ where $s=n/6$. For any matching 
    between the vertices of $V_1'$ and $V_3'$ of size $k$ ($3\leq k\leq s$) the probability that at least one edge is in $M$ can be calculated by simple counting.
    The inclusion-exclusion formula gives the number of perfect matchings containing at least one edge

    \[
    k(s-1)!-\binom{k}{2}(s-2)!+\binom{k}{3}(s-3)!-\ldots \geq \frac{k(s-1)!}{2}.
    \]
    The probability that at least one edge is in $M$ is at least $k/(2s)$.
    
    \item Define an auxiliary bipartite graph $G_M$ between $V_1''$ and $V_3''$ as follows: Two vertices, $v_1\in V_1''$ and $v_3\in V_3''$ are connected by an edge if there is a pair $m_i\in M$ such that the set of the four vertices, $\{\{v_1,v_2\}\cup m_i\}$ are from a $(6,3)$-configuration.

    \item For any pair $v_1\in V_1'', v_3\in V_3''$ the probability that $(v_1,v_3)\in E(G_M)$ depends on the number of $(6,3)$-configuration containing these two vertices. This number, denoted by $M(v_1,v_3)$ is between $0$ and $s$. 
    Note that the other pairs of vertices forming a $(6,3)$-configuration with $v_1,v_3$ give a matching between the vertices of $V_1'$ and $V_3'$. 
    The probability that  $(v_1,v_3)\in E(G_M)$ is at least $M(v_1,v_3)/2s$. 
    
    By the linearity of expectations, the expected number of edges in $G_M$ is at least 
    
    \[
    \sum_{v_1\in V_1'', v_3\in V_3''}\frac{M(v_1,v_3)}{2s}={c_3n^2}.
    \]
    Let us select a matching, $M^*$ where $G_{M^*}$ has  at least ${c_3n^2}$ edges. The pairs are indexed as $M^*=\{m_1,m_2,\ldots,m_{s'}\}$.

    \item In the last step, we partition $G_{M^*}$ into matchings. For every pair $m_i\in M^*$, we select a set of edges from $G_{M^*}$ following the order of the indices of $m_i$. First, we select all $(v,w)\in E(G_{M^*})$ so that $\{\{v,w\}\cup m_1\}$ are from a $(6,3)$-configuration. The set of selected edges forms a matching, denoted by $GM_1$. We remove these edges and continue the selection of matchings. 
    In the $i$-th step, the matching $GM_i$ is given by selecting all pairs from the remaining edges 
    \[
    (v,w)\in \{G_{M^*}\setminus \{\cup_{1\leq j<i} GM_j\}\}
    \]
    where the points in $\{\{v,w\}\cup m_i\}$ are from a $(6,3)$-configuration. 
    \end{enumerate}

\medskip
\noindent
Now we are ready to complete the proof. The set of edges of $G_{M^*}$ is the union of disjoint matchings, $GM_i$ ($1\leq i\leq s'$). If $n$ is large enough, then by Theorem \ref{matching}, there is a matching, $GM_i$, which is not induced; its edges span another edge, $e$. Now, $m_i$ and the two edges of $M_i$ spanning $e$  together with $e$ span a wicket (fig. \ref{Wicket2})

\begin{figure}[h!]
\centering
\includegraphics[scale=.6]{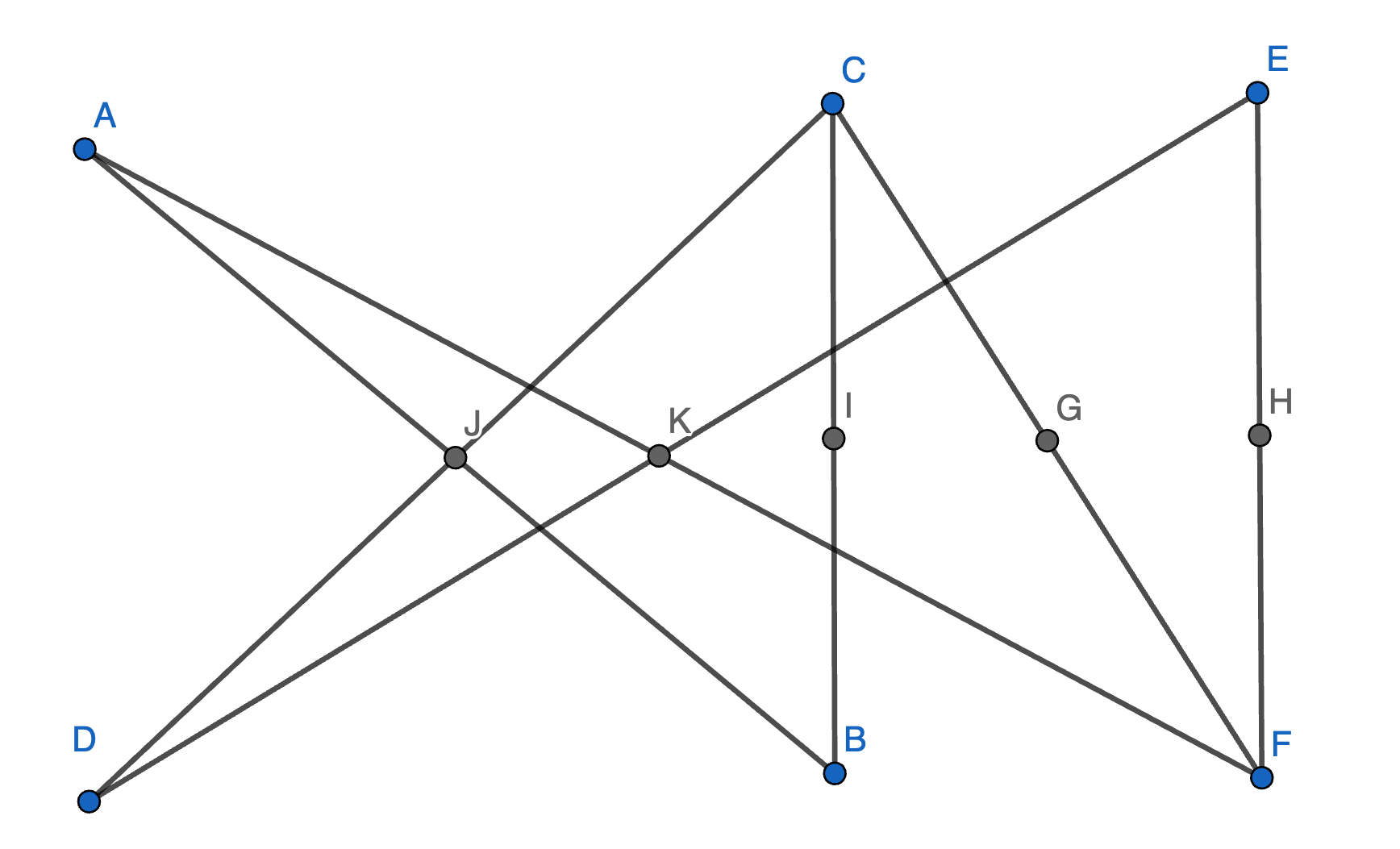}
\caption{The vertices of $m_i=\{A,D\}$, $\{(C,B),(E,F)\in GM_i\}$ and $e=(C,F)$ span a wicket with the additional vertices $J,K,G$.  }
\label{Wicket2}
 \end{figure}

\noindent
The wicket we've found is the subgraph of a larger hypergraph on 11 or 10 vertices (depending on whether $I$ and $H$ are disjoint vertices) with seven edges. Our second proof results in a larger graph on 13 or 12 vertices and nine edges.
\end{proof}

\subsection{Second Proof of Theorem \ref{Main}}

In this proof, we use the hypergraph regularity lemma by Frankl and R\"odl \cite{FR}.

\begin{thm}\label{FranklRodl}
    For any $c>0$, there is a $\delta>0$ such that the following holds for large enough $n$: If $H^3_n$ contains at least $cn^3$ pairwise edge-disjoint complete subgraphs $K^3_4$, then it has at least $\delta n^4$ complete subgraphs $K^3_4$.
\end{thm}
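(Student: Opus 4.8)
The plan is to obtain Theorem \ref{FranklRodl} as a supersaturation corollary of the $3$-uniform hypergraph removal lemma for $K^3_4$, which is the principal output of the Frankl--R\"odl regularity method \cite{FR}. The single idea is that pairwise edge-disjointness converts a count of copies into a lower bound on the number of edge-deletions needed to destroy all copies, and the removal lemma then amplifies this into a count of copies that is larger by a factor of $n$.

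First I would reformulate the hypothesis. Suppose $H^3_n$ contains $cn^3$ pairwise edge-disjoint copies of $K^3_4$. Any family of edges that meets every copy of $K^3_4$ must contain at least one edge from each of these $cn^3$ copies, and since the copies share no edges these are distinct edges; hence one cannot make $H^3_n$ free of $K^3_4$ by deleting fewer than $cn^3$ of its edges. Next I would invoke the removal lemma: for every $\epsilon>0$ there is a $\delta>0$ so that any $3$-uniform hypergraph on $n$ vertices with fewer than $\delta n^4$ copies of $K^3_4$ can be made $K^3_4$-free by deleting at most $\epsilon n^3$ edges. Applying this with $\epsilon=c$ and taking the contrapositive: if $H^3_n$ had fewer than $\delta n^4$ copies of $K^3_4$, it could be cleaned of all copies by deleting fewer than $cn^3$ edges, contradicting the previous sentence. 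Therefore $H^3_n$ has at least $\delta n^4$ copies, as required.

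The genuinely hard part is the removal lemma itself, which is not elementary in the hypergraph setting. Its proof runs through the Frankl--R\"odl regularity lemma together with the accompanying counting lemma: one regularizes $H^3_n$ so that almost all edges lie in regular triads sitting over a regular partition of the underlying pairs, discards the $o(n^3)$ edges in sparse or irregular triads, and then shows via the counting lemma that any surviving configuration of dense regular triads that supports a single $K^3_4$ in fact supports $\Omega(n^4)$ of them. The delicate point --- and the reason graph regularity does not suffice --- is that the $3$-uniform counting lemma requires the triad structure to be regular \emph{relative to} a simultaneously regular partition of the pair level, so the two layers of regularity must be produced and controlled together. This coupling is precisely what the Frankl--R\"odl machinery delivers, and it is where essentially all of the difficulty resides; the amplification from $cn^3$ edge-disjoint copies to $\delta n^4$ total copies is then only a clean packaging step on top of it.
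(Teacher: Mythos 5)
The paper does not prove this statement at all: it is quoted as a known theorem of Frankl and R\"odl \cite{FR} and used as a black box, so there is no in-paper argument to compare against. Your derivation --- pairwise edge-disjointness forces at least $cn^3$ edge deletions to destroy all copies of $K^3_4$, and the $3$-uniform removal lemma for $K^3_4$ (applied with, say, $\epsilon=c/2$ so that ``at most $\epsilon n^3$ deletions suffice'' strictly contradicts ``at least $cn^3$ deletions are needed'') then yields $\delta n^4$ copies --- is the standard and correct route to this result, with the only real content residing, as you say, in the removal lemma itself.
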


\begin{proof}
We follow the first two steps of the previous proof. Define a 4-partite 3-uniform hypergraph, $H^3_m$ on the vertex sets $V_1',V_1'',V_3',V_3''$. The edges are defined by the $(6,3)$-configurations; each triple of the four vertices span and edge. Every $(6,3)$-configuration spans a $K^3_4$ clique in $H^3_m$. Since every edge uniquely determines the $(6,3)$-configuration,  $H^3_m$ contains at least $c_2n^3$ pairwise edge-disjoint $K_4^3$ cliques on $m=4n/6$ vertices. By Theorem \ref{FranklRodl}, we know that the number of $K_4^3$ cliques is at least $c_4n^4$ in the graph. If two edges in a $K_4^3$ are from the same $(6,3)$-configuration, the other two are also from there since any pair of edges determines the four vertices. If $c_4n^4>c_2n^3$ then there is a $K_4^3$ where every edge is from a different $(6,3)$-configuration. Let's denote the vertices of such a $K_4^4$ by $w_1',w_1'',w_3',w_3''$, where $w_1'\in V_1',w_1''\in V_1'',w_3'\in V_3',w_3''\in V_3''$. 

\begin{figure}[h!]
\centering
\includegraphics[scale=.45]{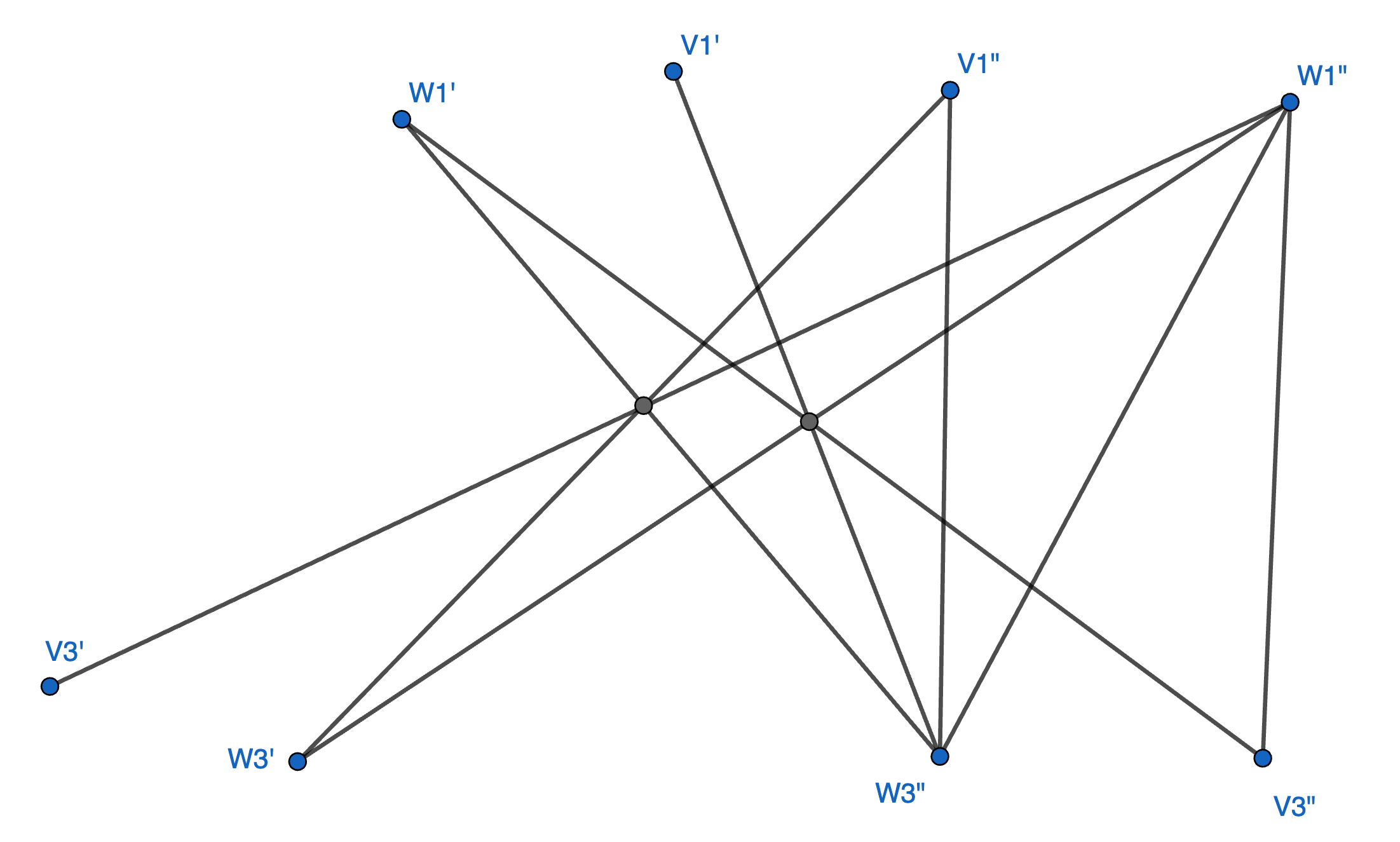}
\caption{The edges spanned by the vertices $W1',W1'', $$ W3',W3'', $$ V1'',V3''$ contain a wicket.}
\label{Wicket_B}
 \end{figure}
 
For every triple, the fourth vertex of the $(6,3)$-configuration is denoted as 
$v_1'\in V_1',v_1''\in V_1'',v_3'\in V_3',v_3''\in V_3''$. For example, the four vertices $w_1',w_1'', v_3',w_3''$ span a $(6,3)$-configuration. The eight vertices, the $w$-s and $v$-s, are all disjoint. The six vertices $w_1',w_1'',w_3',w_3'',v_1'',v_3''$ span the edges of a wicket with the two extra edges spanned by $v_1'',w_3''$ and by $w_1'',v_3''$ (fig \ref{Wicket_B}).
\end{proof}

\medskip

\section{Further Remarks}
In the two proofs, we did find two hypergraphs that are unavoidable in dense 3-uniform hypergraphs. The two graphs are shown in Fig. \ref{First} and \ref{Second}.

\begin{figure}[h]
\centering
\includegraphics[scale=.6]{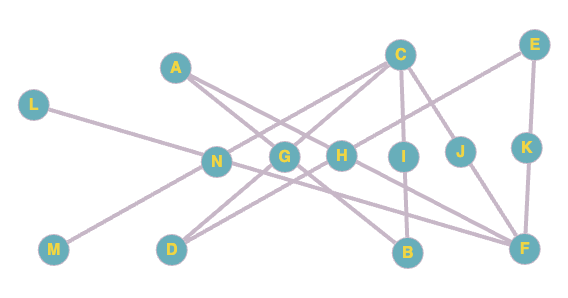}
\caption{The first unavoidable hypergraph in dense hypergraphs has 14 vertices and 9 edges, or 13 vertices and 9 edges if $I$ and $K$ are the same vertices}
\label{First}
 \end{figure}

 \begin{figure}[h]
\centering
\includegraphics[scale=.6]{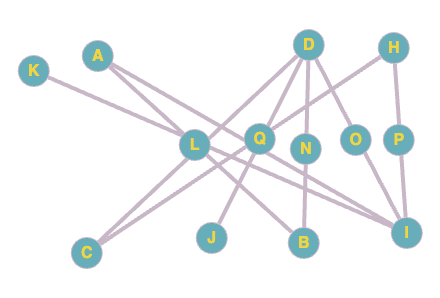}
\caption{The second unavoidable hypergraph in dense linear hypergraphs has 13 vertices and 9 edges, or 13 vertices and 9 edges if $N$ and $P$ are the same vertices}
\label{Second}
 \end{figure}

 The main question remains to decide the actual Tur\'an number of wickets in 3-uniform linear hypergraphs. Here we proved that if $H_n^{(3)}$ 
 contains no wickets, then it is sparse. On the other hand, the best-known construction avoiding wickets is an arrangement avoiding $C_4$-s, resulting in the $\Omega(n^{3/2})$ lower bound, as noted in \cite{GyS_1}.

\section{Acknowledgements}
The research was partly supported by an NSERC Discovery grant and OTKA K  grant no.119528. The author is thankful to Andr\'as Gy\'arf\'as and G\'abor S\'ark\"ozy for introducing the problem and the valuable remarks.


\end{document}